\pgfplotsset{width=10cm,compat=1.9}
\def\@setauthors{%
  \begingroup
  \def\thanks{\protect\thanks@warning}%
  \trivlist
  \centering\footnotesize \@topsep30\p@\relax
  \advance\@topsep by -\baselineskip
  \item\relax
  \author@andify\authors
  \def\\{\protect\linebreak}

  \normalsize\lowercase{\authors}%
  
	\ifx\@empty\contribs
  \else
    ,\penalty-3 \space \@setcontribs
    \@closetoccontribs
  \fi
  \endtrivlist
  \endgroup
}
\def\@settitle{\begin{center}
\LARGE\lowercase{\@title}
  \end{center}%
}
\definecolor{lightblue}{HTML}{2B77A4}
\definecolor{darkred}{HTML}{9E0D0D}
\numberwithin{equation}{section}
\newtheorem{thm}{Theorem}[section]
\newtheorem{lma}[thm]{Lemma}
\renewcommand{\epsilon}{\varepsilon}
\newcommand{\eps}{\varepsilon}
\newcommand{\rd}{\mathbb{R}^d}
\renewcommand{\geq}{\geqslant}
\renewcommand{\leq}{\leqslant}
\newcommand{\ubd}{\overline{\dim}_{\textup{B}}}
\newcommand{\lbd}{\underline{\dim}_{\textup{B}}}
\newcommand{\hd}{\dim_{\textup{H}}}
\newcommand{\bd}{\dim_{\textup{B}}}
\newcommand{\pd}{\dim_{\textup{P}}}
\title{On variants of the Furstenberg set problem}
\author{Jonathan M. Fraser\\ \\
 University of St Andrews, Scotland\\
\MakeLowercase{Email: jmf32@st-andrews.ac.uk}}
\thanks{The  author was  financially supported by   an \emph{EPSRC Standard Grant} (EP/Y029550/1) and a \emph{Leverhulme Trust Research Project Grant} (RPG-2023-281).}
\begin{document}


\maketitle
\thispagestyle{empty}

\begin{abstract}
Given an integer $d \geq 2$,  $s \in (0,1]$, and $t \in [0,2(d-1)]$, suppose a  set  $X$ in $\rd$  has the following property:~there is a collection of lines of packing dimension $t$ such that every line from the collection intersects $X$ in a  set of packing dimension at least $s$.  We show that such sets must have packing dimension at least $\max\{s,t/2\}$ and that this bound is sharp.  In particular, the special case $d=2$ solves a variant of the \emph{Furstenberg set problem} for packing dimension. We also solve the upper and lower box  dimension variants of the problem. In both of these cases the sharp threshold is $\max\{s,t+1-d\}$.
\\ \\ 
\emph{Mathematics Subject Classification 2020}:   28A80, 28A78.
\\
\emph{Key words and phrases}:  Furstenberg set, box dimension, packing dimension.
\end{abstract}

\tableofcontents

\section{Motivation and main results}

Given $s \in (0,1]$ and $t \in [0,2]$, an $(s,t)$-\emph{Furstenberg set} $X$ is a subset of the plane for which there is a set of lines $\mathcal{L}$ of Hausdorff dimension at least $t$ such that for every line $L \in \mathcal{L}$, $\hd X \cap L \geq s$, where $\hd$ denotes the \emph{Hausdorff dimension}.  The celebrated \emph{Furstenberg set problem}---which goes back to Furstenberg \cite{furst}; see also Wolff \cite{wolff} and Bourgain \cite{bourgain, bourgain2}---is to determine the smallest possible  Hausdorff dimension of an  $(s,t)$-Furstenberg set as a function of $s$ and $t$.  Results pertaining to this problem have applications in a variety of directions, including exceptional set estimates for orthogonal projections and sum-product type theorems. The problem was recently resolved by Ren and Wang \cite{wang} where it was proved that an $(s,t)$-Furstenberg set   has Hausdorff dimension at least
\[
\min\left\{s+t, \frac{3s+t}{2}, s+1\right\}.
\]
In particular, this  bound is the best possible.   Their  proof built on significant progress in the area in recent years; see, for example,  \cite{GSW19,OS23,OS23+,shmerkinwang,wang} and the references therein. 

Our main question is:~what if Hausdorff dimension is replaced by a different notion of dimension throughout the above discussion?  Can similar dimension bounds be derived?  We completely resolve this problem for the upper and lower box dimensions and the packing dimension and show that the sharp bounds take on a different form; indeed, Furstenberg sets in this context can be much smaller than in the Hausdorff dimension setting. We are especially interested in the planar case ($d=2$) but we also resolve the general case in higher dimensions. 
We write $\ubd$ and $\lbd$ to denote the upper and lower box dimensions and $\pd$ to denote the packing dimension; see Section \ref{notation} for the definitions. Our first theorem gives general lower bounds for the upper and lower box dimension variant.

\begin{thm} \label{mainbound}
Let  $d \geq 2$ be an integer,  $s \in [0,1]$, and $t \in [0,2(d-1)]$.  Suppose $X \subseteq \mathbb{R}^d$ is a bounded set such that there is a collection of lines $\mathcal{L}$ of upper box dimension $t$ such that, for all $L \in \mathcal{L}$, $  L \cap X $ is a non-empty set of upper box dimension at least  $s$.  Then
\[
\ubd X \geq \max\{s, t+1-d\}.
\]
Moreover, the same bound holds with upper box dimension replaced by lower box dimension throughout (in both assumptions and conclusion).
\end{thm}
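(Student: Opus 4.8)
The plan is to prove the two lower bounds $\ubd X \ge s$ and $\ubd X \ge t+1-d$ separately and then take the maximum, and similarly for $\lbd$. The bound $\ubd X \ge s$ is immediate: since $\mathcal{L}$ has box dimension $t \ge 0$ it is non-empty, so we may fix some $L \in \mathcal{L}$; then $L \cap X$ is a non-empty subset of $X$ with $\ubd (L \cap X) \ge s$, and monotonicity of upper box dimension gives $\ubd X \ge \ubd (L \cap X) \ge s$. The identical argument with $\lbd$ in place of $\ubd$ handles the lower box case. Note that this step uses the full hypothesis on the slices $L \cap X$, whereas the remaining bound $t+1-d$ will use only that each $L \cap X$ is non-empty; thus the two parts are genuinely independent and the claimed answer is their maximum.

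For the bound $\ubd X \ge t+1-d$ (which is only non-trivial when $t > d-1$) the strategy is a single-scale covering/incidence count. After rescaling we may assume $X \subseteq B(0,1)$, so $\mathcal{L}$ consists of lines meeting $B(0,1)$, a relatively compact subset of the space $A(d,1)$ of affine lines in $\mathbb{R}^d$, which is a $(2d-2)$-dimensional manifold parametrised in the usual way by a direction in $\mathbb{RP}^{d-1}$ together with a displacement in a $(d-1)$-dimensional space, equipped with its natural metric. Fix a small $\delta>0$. Cover $X$ by $N_\delta(X)$ balls of radius $\delta$ with centres $p_1,\dots,p_{N_\delta(X)} \in B(0,2)$, and let $\mathcal{L}_\delta \subseteq \mathcal{L}$ be a maximal $K\delta$-separated subset, where $K=K(d)$ is a large constant to be chosen below; since $K$ depends only on $d$ we have $\#\mathcal{L}_\delta \ge N_{K\delta}(\mathcal{L}) \gtrsim_d N_\delta(\mathcal{L})$. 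Every $L \in \mathcal{L}_\delta$ meets $X$, hence meets some ball $B(p_j,\delta)$; assign to each $L$ one such index $j(L)$.

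The geometric heart of the argument is the claim that, for each fixed $j$, the number of $L \in \mathcal{L}_\delta$ with $j(L)=j$ is at most $C_d\,\delta^{-(d-1)}$. Indeed, any two such lines $L_1,L_2$ pass through the common ball $B(p_j,\delta)$, and an elementary computation with the parametrisation (pick $q_i \in L_i \cap B(p_j,\delta)$, so $|q_1-q_2|\le 2\delta$, and use that the displacement coordinate is a Lipschitz function of $(q_i,\theta_i)$ on bounded sets) shows that $\mathrm{dist}(L_1,L_2) \le C_1|\theta_1-\theta_2| + C_2\delta$ in the line metric, where $\theta_i$ is the direction of $L_i$ and $C_1,C_2$ depend only on $d$; choosing $K>C_2$ forces $|\theta_1-\theta_2| \gtrsim_d \delta$, and directions live in the $(d-1)$-dimensional space $\mathbb{RP}^{d-1}$, so there are at most $C_d\,\delta^{-(d-1)}$ of them. (This is essentially the standard fact that the lines through a fixed $\delta$-ball form the $\delta$-neighbourhood of a $(d-1)$-dimensional slice of line space.) Summing over $j$ yields $N_\delta(\mathcal{L}) \lesssim_d \#\mathcal{L}_\delta \le N_\delta(X)\,C_d\,\delta^{-(d-1)}$, i.e.\ $N_\delta(X) \gtrsim_d \delta^{\,d-1}\,N_\delta(\mathcal{L})$ for all small $\delta>0$.

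Finally I would feed this into the definition of box dimension. Given $\eps>0$, in the upper box case there is a sequence $\delta_n \downarrow 0$ along which $N_{\delta_n}(\mathcal{L}) \ge \delta_n^{-(t-\eps)}$, whence $N_{\delta_n}(X) \gtrsim_d \delta_n^{\,d-1-t+\eps}$ and so $\ubd X \ge \limsup_n \frac{\log N_{\delta_n}(X)}{-\log \delta_n} \ge t+1-d-\eps$; letting $\eps \downarrow 0$ gives $\ubd X \ge t+1-d$, and combining with $\ubd X \ge s$ finishes that case. In the lower box case the inequality $N_\delta(\mathcal{L}) \ge \delta^{-(t-\eps)}$ holds for \emph{all} sufficiently small $\delta$, so $N_\delta(X) \gtrsim_d \delta^{\,d-1-t+\eps}$ for all small $\delta$, giving $\lbd X \ge t+1-d-\eps$ for every $\eps>0$, hence $\lbd X \ge \max\{s,t+1-d\}$ as required. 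The only step demanding any real care is the incidence estimate for lines through a common $\delta$-ball, and even there the whole content is the dimension count $\dim A(d,1) = 2(d-1)$ against $\dim \mathbb{RP}^{d-1} = d-1$; everything else is routine bookkeeping with covering numbers.
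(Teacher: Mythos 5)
Your proof is correct and is essentially the paper's argument: both reduce to the single-scale inequality $N_\delta(X)\gtrsim \delta^{d-1}N_\delta(\mathcal{L})$, which rests on the fact that a $\delta$-separated family of lines meeting a common $\delta$-ball has cardinality $O(\delta^{-(d-1)})$ because their directions must be $\gtrsim\delta$-separated in the compact $(d-1)$-dimensional Grassmannian. The only difference is organisational: the paper pigeonholes over the $\approx\delta^{-(d-1)}$ direction cells to extract many near-parallel lines with $4\delta$-separated translations and then notes that their intersection points with $X$ are $\delta$-separated, whereas you bound the multiplicity of the incidence map from a maximal separated subfamily of $\mathcal{L}$ to the covering balls of $X$ --- dual bookkeeping for the same estimate.
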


 It is perhaps noteworthy that $s=0$ is permitted in the above result, whereas it must be omitted in the solution of the Furstenberg set problem stated above.  Indeed, a single point intersects a $(d-1)$-dimensional set of lines. Next we prove that the bounds obtained in Theorem \ref{mainbound} are sharp, thus fully resolving the Furstenberg set problem for the box dimensions in the plane and in higher dimensions.  In fact we prove something a bit stronger by allowing the intersections with lines to have \emph{Hausdorff} dimension at least $s$.

\begin{thm} \label{mainsharp}
Let  $d \geq 2$ be an integer,    $s \in [0,1]$, and $t \in [0,2(d-1)]$.  There exists a bounded set  $X \subseteq \mathbb{R}^d$ with
\[
\bd X = \max\{s, t+1-d\}
\]
and a collection of lines $\mathcal{L}$ of box dimension $t$ such that, for all $L \in \mathcal{L}$, $  L \cap X $ is a non-empty set of Hausdorff dimension at least  $s$.
\end{thm}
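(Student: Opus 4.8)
\emph{Proof proposal.} The plan is to exhibit an explicit ``scale-staggered'' example and to read off the lower bound on $\bd X$ for free from Theorem~\ref{mainbound}. Write $\rho=\max\{s,t+1-d\}$ for the target dimension and split $t=\rho+\sigma$, regarding $\rho$ as a number of ``position'' and $\sigma=t-\rho$ as a number of ``direction'' degrees of freedom for lines near the $x_d$-axis; note that $\rho\le d-1$ always and that $\sigma\in[0,d-1]$ as soon as $t\ge s$. Fix self-similar sets satisfying the open set condition with all contraction ratios equal (so their covering functions are $\asymp\delta^{-\dim}$ at every scale): $P\subseteq[0,1]^{d-1}$ with $\bd P=\rho$, $\Omega\subseteq[0,1]^{d-1}$ with $\bd\Omega=\sigma$, and $C_0\subseteq[0,1]$ with $\hd C_0=\bd C_0=s$. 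Identify $(p,u)\in P\times\Omega$ with the line $L_{p,u}=\{(p+x_d u,x_d):x_d\in\mathbb{R}\}\subseteq\mathbb{R}^{d-1}\times\mathbb{R}=\mathbb{R}^d$. Let $a_k\nearrow\infty$ with $a_k\approx(\rho/t)k$, let $P_k\subseteq P$ and $\Omega_k\subseteq\Omega$ be maximal $2^{-a_k}$-separated subsets (i.e.\ $2^{-a_k}$-nets), and put
\[
X_k=\bigcup_{p\in P_k}\bigcup_{u\in\Omega_k}\bigl\{(p+x_d u,x_d):x_d\in 2^{-k}(1+C_0)\bigr\},\qquad X=\bigcup_{k\ge1}X_k,
\]
with $\mathcal{L}=\{L_{p,u}:(p,u)\in\bigcup_k(P_k\times\Omega_k)\}$. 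Thus at level $k$ we resolve the line family only to precision $2^{-a_k}$, and on each such net line we place a copy of $C_0$ contracted by $2^{-k}$ inside the slab $\{x_d\in[2^{-k},2^{-k+1}]\}$; these slabs are pairwise disjoint.

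The line family has the right size: $(p,u)\mapsto L_{p,u}$ is bi-Lipschitz from $[0,1]^{2(d-1)}$ onto its image in the space of lines, and $\bigcup_k(P_k\times\Omega_k)$ is dense in $P\times\Omega$ because the nets become arbitrarily fine, so (box dimension being unchanged under closure) $\bd\mathcal{L}=\bd(P\times\Omega)=\bd P+\bd\Omega=t$; here $\ubd(P\times\Omega)\le\ubd P+\ubd\Omega$, while $\lbd(P\times\Omega)\ge\hd(P\times\Omega)\ge\hd P+\hd\Omega$, using $\hd=\bd$ for $P,\Omega$. The intersections are also large enough: if $(p,u)\in P_k\times\Omega_k$ then $L_{p,u}\cap X\supseteq\{(p+x_du,x_d):x_d\in 2^{-k}(1+C_0)\}$ is a bi-Lipschitz image of the dimension-$s$ set $2^{-k}(1+C_0)$, hence non-empty with $\hd(L_{p,u}\cap X)\ge s$. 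Consequently $\lbd(L\cap X)\ge\hd(L\cap X)\ge s$ and $\lbd\mathcal{L}=t$, so the lower box dimension version of Theorem~\ref{mainbound} gives $\lbd X\ge\max\{s,t+1-d\}=\rho$ at once.

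It therefore remains only to prove $\ubd X\le\rho$, i.e.\ $N_{2^{-n}}(X)\lesssim n\,2^{n\rho}$, and as the slabs are disjoint $N_{2^{-n}}(X)\le\sum_k N_{2^{-n}}(X_k)$. For $k>n$ every point of $X_k$ lies within $O(2^{-n})$ of $P\times\{0\}$ (there $x_d\lesssim 2^{-k}\le 2^{-n}$), so $\bigcup_{k>n}X_k$ sits in an $O(2^{-n})$-neighbourhood of $P\times\{0\}$ and contributes only $\lesssim N_{2^{-n}}(P)\asymp 2^{n\rho}$ cubes. For $k\le n$ the crude bound
\[
N_{2^{-n}}(X_k)\ \le\ |P_k|\,|\Omega_k|\,N_{2^{-n}}\bigl(\text{one piece}\bigr)\ \lesssim\ 2^{a_k\rho}\cdot 2^{a_k\sigma}\cdot 2^{(n-k)s}\ =\ 2^{\,a_kt+(n-k)s}
\]
suffices, because with $a_k\approx(\rho/t)k$ the exponent equals $\approx ns+k(\rho-s)$, which (as $0\le s\le\rho\le t$) is non-decreasing in $k$ and attains the value $n\rho$ at $k=n$; hence $\sum_{k\le n}N_{2^{-n}}(X_k)\lesssim n\,2^{n\rho}$. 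This gives $\ubd X\le\rho$, and with the preceding paragraph $\bd X=\rho=\max\{s,t+1-d\}$, as required.

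Three degenerate ranges are handled separately: if $t=0$ take $X$ a dimension-$s$ self-similar subset of a line and $\mathcal{L}$ that line; if $s=0$ and $t\le d-1$ take $X$ a single point and $\mathcal{L}$ any dimension-$t$ family of lines through it; and if $0<t<s$ (so $\sigma=t-\rho<0$ is inadmissible) run the same construction with $\Omega$ a single point — only lines parallel to the $x_d$-axis, with all $t$ of the position freedom in $P\subseteq[0,1]^{d-1}$ — and rate $a_k\approx(s/t)k$, so that the tail contributes $\lesssim N_{2^{-n}}(P)\asymp 2^{nt}\le 2^{ns}$ and the main sum is again $\lesssim n\,2^{ns}$. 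The conceptual heart of the proof — and what I expect to require the most care — is the staggering itself: a ``one-scale'' construction, placing full dimension-$s$ pieces on a positive-dimensional (let alone dense) family of lines all at the same scale, overshoots, since a standard incidence count then forces $\bd X\ge t+s-d+1$, which exceeds $\max\{s,t+1-d\}$ whenever $s>0$ and $t>d-1$. Spreading the pieces over the scales $2^{-k}$ and resolving the line family only to $2^{-a_k}$ at level $k$ ensures that at any fixed scale the ``revealed'' sub-family of lines has box dimension far below $t$, so the incidence obstruction never bites — which is precisely the mechanism separating box (and packing) dimension from Hausdorff dimension here. Technically, the work sits in the covering estimate above and in calibrating the rate $a_k\approx(\rho/t)k$.
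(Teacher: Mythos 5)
Your proposal is correct, and it reaches the example by a genuinely different route than the paper, although both rest on the same two pillars: the lower bound $\lbd X\ge\max\{s,t+1-d\}$ is imported from Theorem \ref{mainbound} applied to a countable line family whose box dimension is computed via its closure, and the intersections are given Hausdorff dimension $s$ by exhibiting a scaled bi-Lipschitz copy of a fixed self-similar model set inside each line. The differences are in how the dimension budget $t$ is allocated and how the ``staggering'' is organised. The paper always splits $t=(d-1)+(t+1-d)$, taking a countable direction set $\{V_n\}$ accumulating at a single $V_0$ with box dimension $d-1$ and a translation set $\{u_m\}\subseteq V_0^\perp$ of box dimension $t+1-d$; the copy of $E$ on the line indexed by $(m,n)$ is contracted by $2^{-m-n}$, so the copies cluster near $\{u_m\}$ and the covering count is controlled by a convergent geometric series in $(m,n)$. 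You instead split $t=\rho+(t-\rho)$ with $\rho=\max\{s,t+1-d\}$ assigned to positions, use honest $\rho$- and $(t-\rho)$-dimensional self-similar parameter sets, and stagger globally by level: at level $k$ only a $2^{-a_k}$-net of lines is activated, each carrying a copy of $C_0$ at scale $2^{-k}$ in a disjoint dyadic slab, with the calibration $a_k\approx(\rho/t)k$ chosen so that the level-$k$ contribution $2^{a_kt+(n-k)s}$ to $N_{2^{-n}}(X)$ peaks at $k=n$ with value $2^{n\rho}$. Your version costs you an extra case split (you need $t\ge s$ for the direction budget $\sigma=t-\rho$ to be nonnegative, handled by collapsing $\Omega$ to a point when $t<s$, whereas the paper's allocation works uniformly) and relies on slightly heavier inputs (the product inequality $\hd(P\times\Omega)\ge\hd P+\hd\Omega$ and uniform bi-Lipschitz control of the chart $(p,u)\mapsto L_{p,u}$); what it buys is a more transparent view of the calibration mechanism, a line family that is genuinely spread over a $t$-dimensional set in both position and direction rather than having all directions accumulate at one $V_0$, and a template that is structurally much closer to the level-indexed alternating construction the paper needs anyway for the packing-dimension sharpness result (Theorem \ref{mainsharppacking}). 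I see no gaps: the closure argument for $\bd\mathcal{L}=t$, the tail estimate for $k>n$ via the $O(2^{-n})$-neighbourhood of $P\times\{0\}$, and the monotonicity of the exponent $ns+k(\rho-s)$ in $k$ are all sound, and the degenerate ranges $t=0$, $s=0$ with $t\le d-1$, and $0<t<s$ are correctly disposed of.
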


Next we consider the packing dimension variant.  Via a straightforward reduction (which we omit) it is possible to obtain the threshold $\max\{s,t+1-d\}$ for packing dimension as a consequence of Theorem \ref{mainbound}.  As it turns out, this bound is not sharp for $2s<t<2(d-1)$.  Our next  theorem gives stronger general bounds.

\begin{thm} \label{mainboundpacking2}
Let  $d \geq 2$ be an integer,   $s \in (0,1]$, and $t \in [0,2(d-1)]$.  Suppose $X \subseteq \mathbb{R}^d$ is a  set such that there is a collection of lines $\mathcal{L}$ of packing dimension $t$ such that, for all $L \in \mathcal{L}$, $  L \cap X $ is a set of packing dimension at least  $s$.  Then
\[
\pd  X \geq \max\{s, t/2\}.
\]
\end{thm}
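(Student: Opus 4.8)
The plan is to establish the two lower bounds $\pd X \geq s$ and $\pd X \geq t/2$ separately. The first is immediate: fixing any $L \in \mathcal{L}$ (which we take to be non-empty), the set $L \cap X$ is a subset of $X$ with $\pd(L\cap X) \geq s$, so $\pd X \geq \pd(L\cap X) \geq s$ by monotonicity. The substance of the theorem is the bound $\pd X \geq t/2$, and the idea is to reduce the packing-dimension hypotheses to upper-box-dimension statements on countably many pieces (using that packing dimension is the countably stable regularisation of upper box dimension), and then to exploit that a line through two points $p \neq q$ is the image of the pair $(p,q)$ under a map that is Lipschitz away from the diagonal $p = q$.

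Write $\alpha = \pd X$ and fix $\eps > 0$. By the regularisation characterisation of packing dimension we may decompose $X = \bigcup_{i=1}^{\infty} X_i$, where each $X_i$ is bounded (intersecting with balls if necessary) and $\ubd X_i \leq \alpha + \eps$. Fix any $L \in \mathcal{L}$. Since $L \cap X = \bigcup_i (L \cap X_i)$, packing dimension is countably stable, and $\pd(L\cap X) \geq s > 0$, there is an index $i$ with $\pd(L \cap X_i) > 0$; in particular $L \cap X_i$ is uncountable and hence contains at least two points (this is the only step at which $s>0$ is used, and it is what prevents the conclusion from being spoiled by a single point lying on a $(d-1)$-dimensional family of lines). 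Thus, putting $\mathcal{L}_i = \{ L \in \mathcal{L} : \#(L\cap X_i) \geq 2 \}$, we have $\mathcal{L} = \bigcup_{i} \mathcal{L}_i$, and a further application of countable stability of packing dimension gives an index $i$ for which $\pd \mathcal{L}_i > t - \eps$.

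The crux is now the following claim: if $Y \subseteq \mathbb{R}^d$ is bounded and every line in a collection $\mathcal{M}$ meets $Y$ in at least two points, then $\pd \mathcal{M} \leq 2\,\ubd Y$. To prove it, work in the metric space of lines meeting a fixed ball containing $Y$ (parametrised, say, by closest point to the origin together with direction), and let $\Phi$ send a pair of distinct points of $Y$ to the line through them. On each set $E_m = \{(p,q) \in Y \times Y : |p - q| \geq 1/m\}$ the map $\Phi$ is Lipschitz, with constant depending only on $m$ and the diameter of $Y$. Since Lipschitz images do not have larger upper box dimension and $\ubd(Y\times Y) \leq 2\,\ubd Y$ for bounded $Y$, we obtain $\ubd \Phi(E_m) \leq \ubd(Y\times Y) \leq 2\,\ubd Y$ for every $m$. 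As every line in $\mathcal{M}$ lies in $\Phi(E_m)$ for some $m$, we have $\mathcal{M} \subseteq \bigcup_{m} \Phi(E_m)$, and countable stability of packing dimension yields $\pd \mathcal{M} \leq \sup_m \ubd \Phi(E_m) \leq 2\,\ubd Y$. Applying the claim with $Y = X_i$ and $\mathcal{M} = \mathcal{L}_i$ from the previous paragraph gives $t - \eps < \pd \mathcal{L}_i \leq 2\,\ubd X_i \leq 2\alpha + 2\eps$; letting $\eps \downarrow 0$ gives $t \leq 2\alpha$, i.e.\ $\pd X \geq t/2$, which combined with the first paragraph completes the proof.

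The one point requiring care — and the reason the same scheme does not prove the (false) box-dimension analogue of the $t/2$ bound — is the behaviour of $\Phi$ near the diagonal: arbitrarily close points can determine a line of any direction, so $\Phi$ is genuinely only Lipschitz on the pieces $E_m$, and the decomposition $\bigcup_m \Phi(E_m)$ can be reassembled without loss only because packing dimension, unlike box dimension, is countably stable. The remaining ingredients — the regularisation of $\pd$ by $\ubd$, the product inequality $\ubd(Y\times Y)\le 2\,\ubd Y$, and invariance of box dimension under Lipschitz maps — are all standard, and the only modest technical choice is to fix a parametrisation of the space of lines in which these facts apply verbatim; any of the customary ones (closest point plus direction, or Hausdorff distance between intersections with a large ball) works.
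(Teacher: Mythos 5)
Your proof is correct, and while the outer reduction coincides with the paper's, the core estimate is obtained by a genuinely different argument. The reduction---splitting $X$ into bounded pieces $X_i$ with $\ubd X_i \leq \pd X + \eps$, noting that $\pd (L\cap X)\geq s>0$ forces some $L\cap X_i$ to contain two points, and using countable stability of $\pd$ to find one index $i$ with $\pd \mathcal{L}_i > t-\eps$---is exactly what the paper does. The difference is in the key lemma. The paper proves the equivalent statement (its Lemma \ref{mainboundpacking2lma}: if every line in a family of packing dimension $t$ meets a bounded set in two points, then the set has upper box dimension at least $t/2$) by a discretised pigeonhole argument: it further decomposes the line family according to a separation threshold $1/n$ between the two marked points, fixes a scale $\delta$ at which the line family is numerous, and argues that either the points $L_x$ or the points $L_y$ must occupy $\gtrsim \delta^{-(t/2 - 2\eps)}$ many $\delta$-balls. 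You instead prove the dual inequality $\pd \mathcal{M} \leq 2\,\ubd Y$ by writing $\mathcal{M}$ as a countable union over $m$ of Lipschitz images of $\{(p,q)\in Y\times Y : |p-q|\geq 1/m\}$ under the ``line through two points'' map, and then invoking the product bound $\ubd(Y\times Y)\leq 2\,\ubd Y$, Lipschitz non-increase of $\ubd$, and countable stability of $\pd$ over $m$. The two arguments rest on the same geometric fact---a pair of points at distance at least $1/m$ determines its line to within $O_m(1)$ times the uncertainty in the points---but yours packages the scale-by-scale counting into standard lemmas, which is arguably cleaner and makes explicit why the argument tolerates the two countable decompositions (over $i$ and over $m$) precisely because packing dimension, unlike box dimension, is countably stable; the paper's version is more elementary and self-contained, avoiding the product formula. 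Your closing remark about why the $t/2$ bound must fail for box dimension is accurate and consistent with the contrast between Theorems \ref{mainbound} and \ref{mainboundpacking2}.
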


Finally we prove that the bounds obtained in Theorem \ref{mainboundpacking2} are also  sharp, thus fully resolving the Furstenberg set problem for packing dimension  in the plane and in higher dimensions.  

\begin{thm} \label{mainsharppacking}
Let  $d \geq 2$ be an integer, $s \in [0,1]$, and $t \in [0,2(d-1)]$.  There exists a compact set  $X \subseteq \mathbb{R}^d$ with
\[
\pd X = \max\{s, t/2\}
\]
and a collection of lines $\mathcal{L}$ of packing dimension $t$ such that, for all $L \in \mathcal{L}$, $  L \cap X$ is a non-empty set of packing dimension  at least $s$.
\end{thm}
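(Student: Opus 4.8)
\emph{Overall strategy.} My plan is to deduce Theorem~\ref{mainsharppacking} from Theorem~\ref{mainsharp} together with two soft facts about packing dimension: it is bi-Lipschitz invariant, and it is countably stable, $\pd\bigl(\bigcup_i E_i\bigr)=\sup_i\pd E_i$ (unlike box dimension). Beyond the statement of Theorem~\ref{mainsharp} I will also use that its line family can be taken regular enough --- say a similarity copy of a self-similar set --- that $\pd\mathcal L=\bd\mathcal L=t$; pinning $\pd\mathcal L$ down to exactly $t$ rather than just $\le t$ is one of the two points requiring care. The argument splits according to whether $s\ge t/2$ or $s<t/2$, and runs directly in $\rd$ with no reduction to the plane.

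\emph{The case $s\ge t/2$.} Then $\max\{s,t/2\}=s$ and, since $t\le2s\le2\le2(d-1)$ and $s\le1$ force $t+1-d\le2s+1-d\le s$, also $\max\{s,t+1-d\}=s$. I would apply Theorem~\ref{mainsharp} with $(s,t)$: it yields a bounded $X_0\subseteq\rd$ with $\bd X_0=s$, a line family $\mathcal L$ with $\pd\mathcal L=\bd\mathcal L=t$, and, for each $L\in\mathcal L$, a non-empty $L\cap X_0$ with $\hd(L\cap X_0)\ge s$. Set $X=\overline{X_0}$, a compact set. For each $L\in\mathcal L$, $L\cap X\supseteq L\cap X_0$ is non-empty with $\pd(L\cap X)\ge\hd(L\cap X_0)\ge s$; taking one such $L$ gives $\pd X\ge s$, while $\pd X\le\ubd X=\bd X_0=s$. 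Hence $\pd X=s=\max\{s,t/2\}$.

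\emph{The case $s<t/2$.} Then $\max\{s,t/2\}=t/2$ and $0<t\le2(d-1)$, so $t/2\le d-1$. For each integer $n\ge2$ I would put $t_n=t(1-1/n)\uparrow t$ and apply Theorem~\ref{mainsharp} with $(s,t_n)$, obtaining a bounded $X_n\subseteq\rd$ with $\bd X_n=\max\{s,t_n+1-d\}$, a line family $\mathcal L_n$ with $\pd\mathcal L_n=\bd\mathcal L_n=t_n$, and, for each $L\in\mathcal L_n$, a non-empty $L\cap X_n$ with $\hd(L\cap X_n)\ge s$ (replace each $X_n$ by its closure, which affects neither $\bd X_n$ nor these slice bounds). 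Using $s<t/2$ and $t_n+1-d<t+1-d\le t/2$ (the last step being $t\le2(d-1)$), each $\bd X_n<t/2$ and $\sup_n\bd X_n=\max\{s,t+1-d\}\le t/2$. I would also fix a compact Ahlfors $(t/2)$-regular set $Z\subseteq\rd$ (which exists since $0<t/2\le d-1<d$), so $\pd Z=t/2$, together with pairwise disjoint closed balls $Q_n\subseteq\rd\setminus Z$ shrinking to a point $p_0\notin Z$ and similarities $\varphi_n$ of $\rd$ with $\varphi_n(X_n)\subseteq Q_n$, and then define
\[
X=Z\cup\{p_0\}\cup\bigcup_{n\ge2}\varphi_n(X_n),\qquad \mathcal L=\bigcup_{n\ge2}\bigl\{\varphi_n(L):L\in\mathcal L_n\bigr\}.
\]
Then $X$ is compact, and since $Z\subseteq X$ and $\varphi_n(X_n)$ is a bi-Lipschitz copy of $X_n$, countable stability gives
\[
t/2=\pd Z\le\pd X=\max\Bigl\{\pd Z,\;\sup_{n\ge2}\pd X_n\Bigr\}\le\max\Bigl\{t/2,\;\sup_{n\ge2}\bd X_n\Bigr\}=t/2,
\]
whence $\pd X=t/2$, and similarly $\pd\mathcal L=\sup_{n\ge2}\pd\mathcal L_n=\sup_{n\ge2}t_n=t$. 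Finally any $L\in\mathcal L$ is $\varphi_n(L')$ for some $n$ and $L'\in\mathcal L_n$, so $L\cap X\supseteq\varphi_n(L'\cap X_n)$ is non-empty with $\pd(L\cap X)\ge\hd\bigl(\varphi_n(L'\cap X_n)\bigr)=\hd(L'\cap X_n)\ge s$. With the previous case this proves the theorem.

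\emph{Where the difficulty lies.} In this approach the only substantive points are: (i) that the line set of Theorem~\ref{mainsharp} has equal box and packing dimension --- needed so that $\pd\mathcal L$ is exactly $t$; if that theorem is treated purely as a black box one must instead rerun its construction keeping $\mathcal L$ self-similar, which forces all its dimensions to agree; and (ii) the elementary bookkeeping that makes $X$ compact, via the shrinking balls $Q_n$ and the limit point $p_0$. The genuinely hard part is already solved inside Theorem~\ref{mainsharp}: for a self-contained example in the regime $s<t/2$ one cannot, at a single scale, have $X$ look $\tfrac t2$-dimensional, $\mathcal L$ look $t$-dimensional and the slices look $s$-dimensional simultaneously --- a one-scale incidence count forbids it as soon as $s+t/2>d-1$ --- so the line-richness and the slice-richness must be realised on disjoint, widely separated bands of scales, and the delicate task is then to evaluate $\pd X=t/2$ and $\pd\mathcal L=t$ exactly (through the description of packing dimension as an infimum over countable covers) while ensuring that \emph{every} line in the closure of the family still meets $X$ in a set of packing dimension at least $s$. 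The route above simply outsources that work to Theorem~\ref{mainsharp}.
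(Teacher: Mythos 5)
Your bookkeeping with closures, the filler set $Z$, and countable stability is fine, but the proposal founders on precisely the point you flag as ``requiring care'': arranging $\pd \mathcal{L} = t$. The line family constructed in the proof of Theorem \ref{mainsharp} is the countable set $\{V_n + u_m\}_{m,n}$; since packing dimension is countably stable, $\pd \mathcal{L} = 0$, and no countable union of such families (your $\bigcup_n \varphi_n(\mathcal{L}_n)$) can have positive packing dimension either. Worse, the strengthened version of Theorem \ref{mainsharp} that your case $s<t/2$ requires --- a family $\mathcal{L}_n$ with $\pd \mathcal{L}_n = t_n$, slices of dimension at least $s>0$, and $\ubd X_n = \max\{s, t_n+1-d\}$ --- is \emph{refuted} by Theorem \ref{mainboundpacking2}: that theorem would force $\pd X_n \geq t_n/2$, whereas $\pd X_n \leq \ubd X_n = \max\{s,t_n+1-d\} < t_n/2$ whenever $s < t_n/2$ and $t_n < 2(d-1)$. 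So ``rerunning the construction keeping $\mathcal{L}$ self-similar'' is not a technicality but an impossibility: the entire mechanism of Theorem \ref{mainsharp} is the failure of countable stability for box dimension, and that mechanism is unavailable for packing dimension. The same objection already blocks your case $s \geq t/2$ (there is no contradiction with the lower bounds there, but you still have no construction yielding $\pd \mathcal{L} = t$ unless $t=0$).

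The paper instead builds $X$ and $\mathcal{L}$ from scratch by an alternating Cantor-type construction over a very rapidly decreasing sequence of scales $\eta_k$: at some steps the family of lines branches so that $\mathcal{L}$ looks $t$-dimensional while $X$ is forced to look only $(t/2)$-dimensional at those scales, and at other steps the marked points on each line branch so that the slices look $s$-dimensional. The approximate self-similarity of the resulting compact sets guarantees that upper box and packing dimension coincide for both $X$ and $\mathcal{L}$, which is exactly the ingredient your reduction cannot supply. Your closing paragraph correctly identifies that the two kinds of richness must live on disjoint bands of scales; the gap is that Theorem \ref{mainsharp} does not realise this for packing dimension, so the work cannot be outsourced to it.
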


We remark that the problems we consider here are different from the question of estimating  the packing dimension of $(s,t)$-Furstenberg sets themselves.  This  was considered in, for example, \cite{O20, S22}.  In these works the relevant properties of $X$ were still given in terms of the \emph{Hausdorff} dimension and only the conclusion---that is, estimates for the dimension of $X$---were in terms of packing dimension.  Here we consider the `pure packing' problem where everything is in terms of packing dimension.  It may be interesting to consider `mixed cases', where one, for example, considers  the packing dimension of $X$ in terms of the \emph{Hausdorff} dimension of the line set and the \emph{packing} dimension of the intersections (or vice versa).

\subsection{Notation and convention} \label{notation}

We write $\mathcal{G}(d,1)$ for the set of all 1-dimensional subspaces of $\rd$, that is, the set of all lines through the origin.  This is a $(d-1)$-dimensional manifold equipped with the  natural Grassmannian metric given by $d(V,V') = \| P_V-P_{V'}\|$ for $V, V' \in \mathcal{G}(d,1)$.   Here $P_V$ and $P_{V'}$ denote orthogonal projection onto $V$ and $V'$ respectively, and $\| \cdot \|$ is the operator norm.  Given a set $X \subseteq \rd$, a scalar $\lambda \geq 0$, and a vector $v \in \mathbb{R}^d$, we write
\[
\lambda X + v = \{ \lambda x+v : x \in X\}.
\]
We write $\mathcal{A}(d,1)$ to denote the set  of all lines in $\mathbb{R}^d$; the \emph{affine Grassmannian}.  In particular, when we talk about \emph{a collection of lines} $\mathcal{L}$ we mean a set $\mathcal{L} \subseteq \mathcal{A}(d,1)$.  In order  to discuss the dimension of such $\mathcal{L}$ we need a metric on $\mathcal{A}(d,1)$.  For this, first observe that every line  $L \in \mathcal{A}(d,1)$ can be expressed uniquely as 
\begin{equation} \label{stanform}
L = V  + a
\end{equation}
for \emph{direction} $V \in \mathcal{G}(d,1)$   and \emph{translation}  $a \in V ^\perp$.  Since we may identify each $V ^\perp$ with $\mathbb{R}^{d-1}$, this allows us to identify  $\mathcal{A}(d,1)$ with $ \mathcal{G}(d,1) \times \mathbb{R}^{d-1}$ and equip  $\mathcal{A}(d,1)$ with the $1$-metric on  the  product $\mathcal{G}(d,1) \times \mathbb{R}^{d-1}$ where $\mathbb{R}^{d-1}$ is given the Euclidean metric, that is, 
\[
d_1(V+a, V'+a') = \|P_V-P_{V'}\| + |a-a'|.
\]
There are many similar ways to put a metric on $\mathcal{A}(d,1)$ and it is easy to see that our choice is bi-Lipschitz equivalent to the common choices found in the literature.

Throughout the paper we write $A \lesssim B$ to mean there is a constant $c\geq 1$  such that  $A \leq cB$.  Similarly we write $A \gtrsim B$ to mean $B \lesssim A$ and $A \approx B$ if both $A\lesssim B$ and $A \gtrsim B$ hold. We also write $f(k) = o(1)$ to mean that $|f(k)| \to 0$ as $k \to \infty$.

Given a bounded set $X \subseteq \mathbb{R}^d$ and a scale $\delta \in (0,1)$, write $N_\delta(X)$ to denote the smallest number of sets of diameter $\delta$ needed to cover $X$.  Then the \emph{upper} and \emph{lower box dimension} of $X$ are defined by 
\[
\ubd X = \limsup_{\delta \to 0} \frac{\log N_\delta(X)}{-\log \delta}
\]
and
\[
\lbd X = \liminf_{\delta \to 0} \frac{\log N_\delta(X)}{-\log \delta},
\]
respectively. If $\ubd X = \lbd X$ then we write $\bd X$ for the common value and call it simply the \emph{box dimension} of $X$.  The packing dimension, which is a natural dual to the Hausdorff dimension, can be defined in terms of upper box dimension. For an arbitrary non-empty set $X \subseteq \mathbb{R}^d$, the \emph{packing dimension} is
\[
\pd X = \inf \Big\{ \sup_i \ubd X_i : X \subseteq \bigcup_i X_i \Big\}.
\]
See \cite{falconer} for more details on  the box and packing dimensions.

\section{Proof of Theorem \ref{mainbound}: general bounds for box dimension variant}

We give the proof for the upper box dimension.  The analogous result for lower box dimension is proved similarly and we briefly describe this at the end. The lower bound $\ubd X \geq s$ is trivial and so it only remains to prove $\ubd X \geq t+1-d$.  We may therefore assume that $t >d-1$.  Further, assume without loss of generality that $X \subseteq B(0,1)$.  

  Given $\delta \in (0,1)$, let $\mathcal{G}_\delta$ be a minimal $\delta$-cover of  $\mathcal{G}(d,1)$ by balls of diameter $\delta$ and partition $ \mathbb{R}^{d-1}$ into a mesh $\mathcal{R}_\delta$ of $4\delta$ squares given by the product of $d-1$ intervals of the form
\[
[4N\delta, 4(N+1)\delta) \qquad (N \in \mathbb{Z}).
\]
Given a bounded collection of lines $\mathcal{L} \subseteq  \mathcal{A}(d,1)$ identified with a subset of $ \mathcal{G}(d,1) \times \mathbb{R}^{d-1}$ as described in Section \ref{notation}, let $M_\delta(\mathcal{L})$ denote the number of sets  from
\[
\{ G \times R : G \in \mathcal{G}_\delta, R \in \mathcal{R}_\delta\}
\] 
which intersect $\mathcal{L}$.  Then it is easily seen that 
\[
\ubd \mathcal{L} = \limsup_{\delta \to 0} \frac{\log M_\delta(\mathcal{L})}{-\log \delta}.
\]
Let $\eps>0$.  Since $\ubd \mathcal{L} \geq t$ by assumption, it follows that for a sequence of $\delta$ tending to zero, we must have
\begin{equation} \label{rectcover}
M_\delta(\mathcal{L}) \gtrsim \delta^{-(t-\eps)}
\end{equation}
with implicit constant independent of $\delta$ (but may depend on $\eps$).  Fix   a scale $\delta \in (0,1)$ from this sequence.  By the pigeonhole principle, there must exist $G_0 \in \mathcal{G}_\delta$ such that the number of $R \in \mathcal{R}_\delta$ for which $G_0 \times R$ intersects $\mathcal{L}$ is 
\[
\gtrsim \frac{ \delta^{-(t-\eps)}}{\# \mathcal{G}_\delta} \gtrsim \delta^{-(t+1-d-\eps)}.
\]
From each such $G_0 \times R$ which intersects $\mathcal{L}$, choose a line $L$ which intersects $X$.  From this collection of lines, if necessary, extract a subcollection of lines intersecting $X$ of cardinality
\[
\gtrsim \delta^{-(t+1-d-\eps)}
\]
 whose directions are all $\delta$ close and whose translations  are pairwise separated by $4\delta$.  For each line $L$ in this subcollection, choose a point $x \in X \cap L \subseteq B(0,1)$.  By a simple geometric argument, the points $x$ are pairwise separated by at least
\[
4\delta - 2\tan (\delta) \geq  \delta
\]
for $\delta$ sufficiently small.  This proves that $\ubd X \geq t+1-d-\eps$ and letting $\eps \to 0$ proves the desired lower bound.

The analogous result for lower box dimension can be proved in exactly the same way apart from when we choose a  sequence of $\delta$ tending to zero satisfying \eqref{rectcover} we ask for this to hold for all sufficiently small $\delta>0$.

\section{Proof of Theorem \ref{mainsharp}: sharpness  for box dimension}

We  give the proof in the case $ s \in (0,1]$ and $d-1<t\leq2(d-1)$.  The other cases are very similar (in fact, simpler) and we explain how to handle them at the end. 

We first construct $X$.  Let $E \subseteq [0,1]$ be a self-similar set satisfying the strong separation condition  with Hausdorff dimension $s$.  Alternatively $E$ can be any compact $s$-Ahlfors--David regular set in $[0,1]$. The key property we need from $E$ is that for all $c \geq 1$ and $\delta \in (0,1)$ such that $\delta c\leq 1$ the estimate
\begin{equation} \label{scaling}
N_{\delta}(c^{-1}E) = N_{\delta c}(E)  \lesssim (\delta c)^{-s}
\end{equation}
holds with implicit constants independent of $\delta$ and $c$. See \cite{falconer} for more details on self-similar sets.  Let $V_0 \in \mathcal{G}(d,1)$ be a fixed direction and $\{V_n: n \geq 1\} \subseteq \mathcal{G}(d,1)$ be a countable set with a single accumulation point at $V_0$ which satisfies
\[
\bd \{V_n\}_n = \bd \mathcal{G}(d,1) = d-1.
\]
Constructing such sets $\{V_n\}_n$ explicitly is straightforward, but we leave the details to the reader. For $V \in \mathcal{G}(d,1)$,  write  $V(E) $ for the embedding of $E$ in $V$ via an identification of $\mathbb{R}$ and $V$ as vector spaces.  (There are two choices for this identification and it does not matter which we pick.) Let
\[
Y = \bigcup_{n=1}^\infty 2^{-n} V_n(E),
\]
that is,  $Y \subseteq \rd $ is the union of copies of $E$ scaled down by a factor of $2^{-n}$ and placed in   $V_n$.   Further,  let
\[
  \{u_m : m \geq 1 \} \subseteq V _0^\perp \subseteq \rd
\]
be a bounded countable set   with
\[
\bd   \{u_m \}_m = t+1-d \in (0,d-1]
\]
and let 
\[
X = \bigcup_{m=1}^\infty (2^{-m}Y+u_m),
\]
that is, $X \subseteq \rd$ is the union of copies of $Y$  scaled down by a factor of $2^{-m}$ and translated by vectors $u_m \in V _0^\perp$.  Note that $X$ is clearly bounded.  Now, consider the  family of lines
\[
\mathcal{L} = \{ V _n + u_m \}_{m,n},
\]
noting that these lines are  \emph{not}  written in the standard form \eqref{stanform}.   By construction, for  $L =V _n + u_m \in \mathcal{L}$,
\[
L \cap X \supseteq 2^{-m-n}    V_n(E) +u_m
\]
and the right hand side is a scaled down copy of $E$ and thus has Hausdorff dimension $s$.  Moreover, although  $L$ is not written in the standard form \eqref{stanform},   the standard form representative is
\[
(V,a) = \big(V_n, P_{V_n^\perp }(u_m)\big)
\]
where $P_{V_n^\perp }$ denotes orthogonal projection onto $V _n^\perp$ which we identify  with $\mathbb{R}^{d-1}$. Therefore, as a subset of $\mathcal{G}(d,1) \times \mathbb{R}^{d-1}$, 
\[
\mathcal{L} = \left\{\big(V_n, P_{V_n^\perp}(u_m)\big) \right\}_{m,n}
\]
 and a straightforward calculation, similar to how one would  handle the Cartesian product
\[
 \left\{\left(V_n,  u_m\right) \right\}_{m,n},
\]
gives that
\[
\bd \mathcal{L} =   \bd \{ V_n    \}_n  + \bd   \{ u_m  \}_m =  (d-1)+(t+1-d)= t.
\]
This uses that $V_n \to V_0$ as $n \to \infty$ and $\{u_m\}_m \subseteq V _0^\perp$  and, therefore, $P_{V_n^\perp}(u_m) \to u_m$ uniformly as $n \to \infty$. 

All that remains is to find $\bd X$.  The lower bound $\lbd X \geq \max\{s,t+1-d\}$ follows from Theorem \ref{mainbound} and so we  prove the corresponding upper bound for upper box dimension. Let $\eps>0$ and  $\delta \in (0,1)$ be small. Define  $k(\delta)$ to be the unique integer $k \geq 1$ satisfying
\[
2^{-k} \leq \delta < 2^{-(k-1)}.
\]
 Given an integer $m < k(\delta)$ (so that $2^{-m}>\delta$), define $l(\delta,m)$ to be the unique integer $l \geq 1$ satisfying
\[
2^{-m} 2^{-l}\leq \delta < 2^{-m} 2^{-(l-1)}.
\]
Our covering strategy is as follows.  First, we construct an efficient  $\delta$-cover of $\{u_m \}_m \subseteq V _0^\perp \subseteq \mathbb{R}^d$ by balls of diameter $\delta$ in $\rd$.  This gives rise to a $\delta$-cover of comparable size of those sets $(2^{-m}Y+u_m)$ comprising $X$ whose diameters are   smaller than $\delta$.  Next we cover the remaining sets $(2^{-m}Y+u_m)$ whose diameters are bigger than $\delta$ individually.     Consider one of these sets and, without loss of generality, ignore the translation $u_m$ because it does not affect the covering number. Decompose it further as
\[
2^{-m}Y  = 2^{-m}\bigcup_{n=1}^\infty 2^{-n}   V_n(E)= \bigcup_{n=1}^\infty 2^{-m-n}   V_n(E).
\]
All but finitely many of the  sets making up the union on the right are  within $\delta$ of the origin (recall that we threw away the translation) and these are covered by $\approx 1$ many sets of diameter $\delta$.  We cover the remaining sets $2^{-m-n}   V_n(E)$ individually, and this amounts to covering a copy of $E$ scaled down by a factor which is at least $\delta$. (The cases are distinguished by $l(\delta,m)$.) Putting this argument together, we get
\begin{align*}
N_\delta(X) &\lesssim N_\delta\big(\{u_m \}_m\big) + \sum_{m=1}^{k(\delta)-1} N_\delta(2^{-m}Y+u_m) \\
&\lesssim \delta^{d-1-t-\eps} + \sum_{m=1}^{k(\delta)-1} \left( 1+ \sum_{n=1}^{l(\delta,m)-1}N_\delta(2^{-m-n}  E) \right) \\
&=\delta^{d-1-t-\eps} + k(\delta) + \sum_{m=1}^{k(\delta)-1} \sum_{n=1}^{l(\delta,m)-1} N_{\delta2^{n+m}}(E) \\
&\lesssim \delta^{d-1-t-\eps}+ \log(1/\delta) + \sum_{m=1}^{k(\delta)-1} \sum_{n=1}^{l(\delta,m)-1}  (\delta2^{n+m})^{-s} \qquad \text{(by \eqref{scaling})} \\
&\lesssim  \delta^{d-1-t-\eps}  + \delta^{-s}  \sum_{m=1}^{\infty} 2^{-ms}  \sum_{n=1}^\infty   2^{-ns}  \\
&\lesssim \delta^{d-1-t-\eps}  + \delta^{-s}.
\end{align*}
This proves $\ubd X \leq \max\{s,t+1-d+\eps\}$ and letting $\eps \to 0$ proves the desired result. 

The remaining cases are proved similarly.  If $s=0$, then $E$ can be replaced with the single point $\{0\}$ and the resulting set is simply $X = \{ u_m\}_m$ which has box dimension $t+1-d$.  If $t \leq d-1$ then setting $X=Y$ suffices, that is, replacing  $ \{ u_m\}_m$  with the single point $\{0\}$.

\section{Proof of Theorem \ref{mainboundpacking2}: general bound for packing dimension variant}

We first prove a weaker version of Theorem \ref{mainboundpacking2} where we only derive the desired conclusion for upper box dimension.

\begin{lma} \label{mainboundpacking2lma}
Let   $t \in [0,2(d-1)]$ and suppose $X \subseteq \mathbb{R}^d$ is a bounded set such that there is a collection of lines $\mathcal{L}$ of packing dimension $t$ such that, for all $L \in \mathcal{L}$, $  L \cap X $ contains at least two points.  Then
\[
\ubd  X \geq   t/2.
\]
\end{lma}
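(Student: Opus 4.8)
The plan is to exploit the fact that each line in $\mathcal{L}$ meets $X$ in at least two points, so that we can simultaneously encode both the \emph{direction} and the \emph{translation} of a line using a bounded amount of information coming from $X$. Fix $\eps>0$. Since $\pd\mathcal{L}\geq t$ and packing dimension is defined via a countable decomposition, there is a subset $\mathcal{L}'\subseteq\mathcal{L}$ with $\ubd\mathcal{L}'\geq t$ — actually one should be slightly careful: packing dimension $\geq t$ means that $\mathcal{L}$ cannot be covered by countably many sets of upper box dimension $<t$, so by a Baire-type argument there is a set $\mathcal{L}'$ with $\ubd\mathcal{L}'\geq t-\eps$; after intersecting with a small ball in $\mathcal{A}(d,1)$ (using that $\mathcal{A}(d,1)$ is a countable union of such balls) we may assume $\mathcal{L}'$ is bounded, and moreover that all directions lie in a fixed coordinate chart of $\mathcal{G}(d,1)$ so that lines are graphs of affine maps $x_1\mapsto (a x_1 + b)$ with $a,b$ ranging over bounded sets. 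For a sequence of $\delta\to 0$ we then have $M_\delta(\mathcal{L}')\gtrsim\delta^{-(t-2\eps)}$, where $M_\delta$ counts $\delta$-boxes in the $(V,a)$-parametrisation as in the proof of Theorem \ref{mainbound}.

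Next I would set up the two-point encoding. For each of the $\gtrsim\delta^{-(t-2\eps)}$ occupied $\delta$-boxes in parameter space, pick a line $L$ and two points $p_L,q_L\in L\cap X$. A priori $p_L$ and $q_L$ could be very close, which would waste information; to avoid this, split $\mathcal{L}'$ according to the dyadic scale $2^{-j}\leq |p_L-q_L|<2^{-j+1}$ of the separation. Since $X$ is bounded there are only $O(\log(1/\delta))$ relevant scales $j$, so by pigeonholing we may assume a fixed scale $\rho=\rho(\delta)\in(0,1)$ with $|p_L-q_L|\approx\rho$ for a subfamily still of size $\gtrsim\delta^{-(t-2\eps)}/\log(1/\delta)\gtrsim \delta^{-(t-3\eps)}$. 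Now the key geometric observation: if two lines $L,L'$ have $p_L,p_{L'}$ in a common ball of radius $\eta$ and $q_L,q_{L'}$ in a common ball of radius $\eta$, with $|p_L-q_L|\approx\rho$, then their directions agree up to $O(\eta/\rho)$ and (given the directions agree that well) their translations agree up to $O(\eta)$, hence they lie in $O(1)$ common $\delta$-boxes of parameter space provided $\eta/\rho\lesssim\delta$ and $\eta\lesssim\delta$, i.e. provided $\eta\approx\delta\rho$. Conversely, distinct parameter boxes can only be hit by lines whose point-pairs are genuinely spread out at scale $\delta\rho$.

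From this, the counting runs as follows. Consider the map sending a line $L$ in our subfamily to the pair $(p_L,q_L)\in X\times X$. By the geometric observation, any ball of radius $\eta\approx\delta\rho$ in $X$, paired with another such ball, captures lines lying in only $O(1)$ parameter $\delta$-boxes; therefore the number of pairs of $\eta$-balls needed to separate all our lines is $\gtrsim\delta^{-(t-3\eps)}$, which gives $N_\eta(X)^2\gtrsim\delta^{-(t-3\eps)}$, i.e.
\[
N_\eta(X)\gtrsim\delta^{-(t-3\eps)/2}=\bigl(\eta/\rho\bigr)^{(t-3\eps)/2}\geq\eta^{(t-3\eps)/2}
\]
since $\rho\leq 1$ and $t\geq 0$ (using $\eta\approx\delta\rho\leq\delta$, so $\eta\to0$ along the sequence). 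Taking the $\limsup$ over $\delta\to0$ (equivalently $\eta\to0$) yields $\ubd X\geq(t-3\eps)/2$, and letting $\eps\to0$ gives $\ubd X\geq t/2$, as required.

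The main obstacle I anticipate is making the reduction from $\pd\mathcal{L}\geq t$ to a usable box-counting lower bound on a bounded piece: one must correctly pass from the packing-dimension hypothesis to a set $\mathcal{L}'$ of large upper box dimension (this is where the $\eps$-loss and the Baire/countable-stability argument live), and then handle the dyadic pigeonholing over separation scales $\rho$ while keeping the bookkeeping between the scale $\eta\approx\delta\rho$ at which one covers $X$ and the scale $\delta$ at which one covers $\mathcal{L}'$ consistent. The geometry itself (direction determined up to $\eta/\rho$, translation up to $\eta$) is elementary once the coordinate chart on $\mathcal{G}(d,1)$ is fixed, but ensuring the implicit constants do not depend on $\delta$ — in particular that the "$O(1)$ parameter boxes per pair of balls" bound is genuinely uniform — is the delicate point that needs care.
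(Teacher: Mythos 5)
Your core counting mechanism --- encode each line by a pair of points of $X$ lying on it, observe that a pair of small balls in $X\times X$ can capture lines from only $O(1)$ parameter boxes, and deduce $N(X)^2\gtrsim \#\{\text{lines}\}$ --- is essentially the paper's argument (the paper phrases it as a pigeonhole on the first marked point followed by $\delta$-separation of the second, which yields the same square-root bound). However, there is a genuine gap in how you control the separation $|p_L-q_L|$ of the two chosen points. The hypothesis only provides two points on each line with \emph{some} positive separation; boundedness of $X$ bounds $|p_L-q_L|$ from above, not from below, so your dyadic index $j$ ranges over all of $\mathbb{N}$ and the claim that ``there are only $O(\log(1/\delta))$ relevant scales'' is false. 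The pigeonholing therefore does not produce a subfamily of size $\gtrsim\delta^{-(t-3\eps)}$ living at a single separation scale $\rho$ with any control on $\rho$.

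The problem propagates to your final display: with $\eta\approx\delta\rho$ you obtain $N_\eta(X)\gtrsim\delta^{-(t-3\eps)/2}=\eta^{-(t-3\eps)/2}\rho^{(t-3\eps)/2}$, and since $\rho\leq 1$ this is \emph{weaker} than $\eta^{-(t-3\eps)/2}$, not stronger; the inequality as written goes the wrong way. Concretely, if $\rho$ is allowed to shrink with $\delta$ (say $\rho\approx\delta$), your bound degrades to $\ubd X\geq t/4$. What is needed is a lower bound on $\rho$ that is uniform in $\delta$, and this is exactly what the paper extracts \emph{before} any scale is chosen: decompose $\mathcal{L}=\bigcup_n\mathcal{L}_n$, where $\mathcal{L}_n$ consists of those lines carrying two points of $X$ at distance at least $1/n$, and use countable stability of \emph{packing} dimension to fix one $n$ with $\pd\mathcal{L}_n\geq t-\eps$, hence $\ubd\mathcal{L}_n\geq t-\eps$. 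This step is the real reason the hypothesis is stated for packing rather than upper box dimension, and it is the ingredient your proposal is missing; once $\rho=1/n$ is fixed independently of $\delta$, your two-point encoding (equivalently, the paper's case analysis) closes the argument.
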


\begin{proof}
Let $\eps>0$.  For all $L \in \mathcal{L}$, there must exist an integer $n=n(L) \geq 1$ such that there are points $x,y \in L \cap X$ with $|x-y| \geq 1/n$.  Therefore,
\[
\mathcal{L} = \bigcup_{n \geq 1} \{ L \in \mathcal{L} : \text{$ \exists \, x,y \in L \cap X$ s.t.~$|x-y| \geq 1/n$}\} = : \bigcup_{n \geq 1}   \mathcal{L}_n
\]
and therefore there exists an integer $n \geq 1$ (which we fix from  now on) such that $\pd \mathcal{L}_n \geq t-\eps$.  For each $L \in \mathcal{L}_n$, identify points $L_x,L_y \in L \cap X$ such that $|L_x-L_y| \geq 1/n$. Since $\ubd \mathcal{L}_n \geq \pd \mathcal{L}_n \geq t-\eps$, we can find a decreasing  sequence of scales $\delta \in (0,1)$ tending to 0 such that there is a $\delta$-separated collection of lines in $\mathcal{L}_n $ of cardinality 
\[
\gtrsim \delta^{-(t-2\eps)}.
\]
Fix a $\delta$ from this sequence.  If
\begin{equation} \label{endsclose}
N_\delta \left( \cup_{L \in \mathcal{L}_n} \{L_x\} \right) \leq \delta^{-t/2},
\end{equation}
then, by the pigeonhole principle, there exists a single $\delta$-ball containing
\[
\gtrsim \frac{\delta^{-(t-2\eps)}}{\delta^{-t/2}} = \delta^{-(t/2-2\eps)}
\]
many of the points $L_x$. Considering the corresponding points $L_y$ and using $\delta$-separation of lines and $(1/n)$-separation of $L_x$ and $L_y$,
\[
N_\delta(X) \geq N_\delta \left( \cup_{L \in \mathcal{L}_n} \{L_y\} \right) \gtrsim  \delta^{-(t/2-2\eps)}.
\]
Here the implicit constant may depend on $n$. On the other hand, if \eqref{endsclose} does not hold, then
\[
N_\delta(X) \geq  N_\delta \left( \cup_{L \in \mathcal{L}_n} \{L_x\} \right) \geq \delta^{-t/2},
\]
and so we must have
\[
N_\delta(X) \gtrsim \delta^{-(t/2-2\eps)}.
\]
It follows  that $\ubd  X \geq  t/2-2\eps$ and letting $\eps \to 0$ proves the lemma.
\end{proof}

Next we show how to upgrade the above lemma to conclude the full theorem.  The bound $\pd  X \geq  s$ is trivial and so it remains to prove $\pd  X \geq  t/2$. Let $\eps>0$.  By definition of packing dimension we can find a decomposition 
\[
X \subseteq \bigcup_{i \in \mathbb{N}} X_i 
\]
such that each $X_i$ is bounded and 
\begin{equation} \label{packinggood}
\ubd X_i \leq \pd X + \eps
\end{equation}
for all $i \in \mathbb{N}$. Then, for all $L \in \mathcal{L}$,
\[
X \cap L \subseteq \bigcup_{i \in \mathbb{N}} X_i \cap L
\]
and therefore there must exist $i(L) \in \mathbb{N}$ such that
\[
\pd X_{i(L)} \cap L >0
\]
and, in particular, $X_{i(L)} \cap L$ contains at least two points. Then
\[
\mathcal{L} = \bigcup_{i \in \mathbb{N}}  \{ L \in \mathcal{L} :   \pd X_i \cap L >0\}
\]
and so there must exist $i \in \mathbb{N}$ such that
\[
 \pd \{ L \in \mathcal{L} : \pd X_i \cap L >0\} \geq t-\eps.
\] 
Therefore, applying Lemma \ref{mainboundpacking2lma},
\[
\ubd X_i \geq (t-\eps)/2
\]
and so, by \eqref{packinggood},
\[
\pd X \geq \ubd X_i  - \eps \geq  (t-\eps)/2 -\eps
\]
and letting $\eps \to 0$ proves the result.

\section{Proof of Theorem \ref{mainsharppacking}: sharpness  for packing dimension}

Let  $s \in (0,1]$ and $t \in (0,2(d-1)]$. If either $s=0$ or $t=0$, then trivial examples suffice. We construct the set $X$ and the line set $\mathcal{L}$ simultaneously via an iterative process indexed by $k \geq 0$.  Let $(\eta_k)_k$ be an extremely rapidly  decreasing sequence satisfying $\eta_0 = 1$, $\eta_k \to 0$ and
\[
\eta_{k+1} \leq \eta_k^k
\]
for all $k \geq 0$. In particular, this decay condition ensures $\eta_{k}= \eta_{k+1}^{o(1)}$ as $k \to \infty$. 

At step 0 in the construction, take an arbitrary line through the origin marked with the origin.  For $k \geq 0$,  the $k$th  step in the construction will consist of a non-empty $\eta_k$-separated collection of lines each marked with a  non-empty  $\eta_k$-separated collection of points.  Once these have been defined,  let $\mathcal{L}_k \subseteq \mathcal{A}(d,1)$ denote the closed $\eta_k$-neighbourhood of the lines present at step $k$ and $X_k \subseteq \mathbb{R}^d$  denote the closed $(5\eta_k)$-neighbourhood of the points present at step $k$.  Thus, $X_0 = B(0,5)$ and $\mathcal{L}_0$ is the collection of lines at distance at most 1 in $\mathcal{A}(d,1)$ from the   initial line.  It will be clear below that the sets $X_k$ and $ \mathcal{L}_k$ form decreasing nested sequences (for large enough $k$) and we set
\[
X = \bigcap_k X_k
\]
and
\[
\mathcal{L} = \bigcap_k \mathcal{L}_k
\]
which are both non-empty and compact. 

We build the collection of marked lines at step $k+1$ based on the collection at step $k$ by alternating between option (A) and option (B) detailed below.  Option (A) is designed to make $\mathcal{L}$ large and option (B) is designed to make the intersections large. 

\begin{itemize}
\item[(A)] Replace each marked line $L$ present at step $k$ with a maximal collection of $\eta_{k+1}$-separated  lines all lying in the 
\[
 \eta_{k+1}^{1-\frac{t}{2(d-1)}}\eta_k/2
\]
neighbourhood of $L$.  This produces 
\[
\approx \left( \frac{ \eta_{k+1}^{1-\frac{t}{2(d-1)}}\eta_k}{\eta_{k+1}}\right)^{2(d-1)}  =  \eta_{k+1}^{-t} \eta_k^{2(d-1)}
\]
many $\eta_{k+1}$-separated  lines replacing $L$.  For each marked point $x$ on $L$, mark each of the new lines with one point from the $\eta_{k+1}$  neighbourhood of the unique $(d-1)$-dimensional  affine hyperplane  passing through $x$ and orthogonal to $L$.  In particular, at scale $\eta_{k+1}$, the new set of marks can be covered by the number of marks on $L$ multiplied by
\[
\approx \left( \frac{\eta_{k+1}^{1-\frac{t}{2(d-1)}}\eta_k}{\eta_{k+1}}\right)^{d-1}  =  \eta_{k+1}^{-t/2} \eta_k^{d-1}
\]
and this estimate cannot be improved up to constants.

\item[(B)] Replace each marked line $L$ present at step $k$  with  the same line but with 
\[
\approx  \eta_{k+1}^{-s} \eta_k
\]
many $\eta_{k+1}$-separated marks within the $(\eta_{k+1}^{1-s}\eta_k/2)$-neighbourhood of each previous mark.
\end{itemize} 

By construction, and using the decay condition on $\eta_k$, following an application of option (A):
\[
  N_{\eta_{k+1}} (\mathcal{L}_{k+1}) \approx N_{\eta_{k}}  (\mathcal{L}_{k})   \cdot  \eta_{k+1}^{-t}\eta_k^{2(d-1)}\approx \eta_{k+1}^{-t+o(1)}
\]
and
\[
N_{\eta_{k+1}} (X_{k+1}) \approx N_{\eta_{k}} (X_{k})  \cdot  \eta_{k+1}^{-t/2} \eta_k^{d-1}\approx \eta_{k+1}^{-t/2+o(1)}
\]
and, following an application of option (B):
\[
N_{\eta_{k+1}} (\mathcal{L}_{k+1}) = N_{\eta_{k}}  (\mathcal{L}_{k})   \approx \eta_{k+1}^{o(1)}
\]
and
\[
 N_{\eta_{k+1}} (X_{k+1}) \approx N_{\eta_{k}} (X_{k}) \cdot  \eta_{k+1}^{-s} \eta_k  \approx \eta_{k+1}^{-s+o(1)}.
\]
For scales $\delta \in (0,1)$ in between terms in the sequence $(\eta_k)_k$,  say $\eta_{k+1} < \delta < \eta_k$, we get
\[
N_\delta(\mathcal{L}) \lesssim N_{\eta_k}(\mathcal{L}_k)\cdot \max\left\{\left(\frac{\eta_{k+1}^{1-\frac{t}{2(d-1)}}\eta_k}{\delta}\right)^{2(d-1)}, 1\right\}  \lesssim \delta^{-t+o(1)}
\]
and
\[
N_\delta(X) \lesssim N_{\eta_k}(X_k)\cdot \max\left\{\left(\frac{\eta_{k+1}^{1-\frac{t}{2(d-1)}}\eta_k}{\delta}\right)^{d-1},\left(\frac{\eta_{k+1}^{1-s}\eta_k}{\delta}\right), 1\right\}  \lesssim \delta^{-\max\{s, t/2\}+o(1)}.
\]
Therefore, we have established
\[
\ubd \mathcal{L} = t
\]
and 
\[
\ubd X = \max\{s, t/2\}.
\]
However, using the approximate self-similarity of the construction of  $\mathcal{L}$ and $X$, both sets have the property that their box dimensions are preserved upon (non-trivial) intersection with an arbitrary open set.  From this it follows that the upper box dimension and packing dimension coincide in both cases; see \cite[Corollary 3.10]{falconer}. 

Finally, consider the intersections.  For all $L \in \mathcal{L}$,
\[
L \cap X = \bigcap_k L \cap X_k 
\]
and, assuming we have just applied option (B) and  using the decay condition on $\eta_k$, $L \cap X_k$  contains 
\[
\approx \eta_{k}^{-s+o(1)}
\]
many pairwise disjoint intervals of length $\approx \eta_{k}$. To see this, observe that $L$ is in the $\eta_k$-neighbourhood of some line $L'$ present at the $k$th step of the above construction.  The relevant intervals in $L \cap X_k$ are the intersections of $L$ with the $(5\eta_k)$-neighbourhoods of the marked points on $L'$.   Moreover, $L \cap X_k$  is a nested sequence of sets and within each of the    intervals of length $\approx \eta_{k}$ present at step $k$, one can find 
\[
\approx \eta_{k+2}^{-s+o(1)}
\]
many pairwise disjoint intervals of length $\approx \eta_{k+2}$ at step $k+2$ (following the next application of option (B)).  Since this behaviour is repeated infinitely often, it follows that
\[
\pd L \cap X \geq  s.
\]
This completes the proof.

\end{document}